\theoremstyle{plain}
\newtheorem{thm}{Theorem}[section]
\newtheorem{cor}{Corollary}[section]
\newtheorem{lem}[thm]{Lemma}
\theoremstyle{remark}
\newtheorem{qus}{Question}[section]
\newtheorem{rem}{Remark}[section]
\def\Li{\operatorname{Li}}
\def\Res{\operatorname{Res}}
\author{Carlo Sanna}
\address{Department of Mathematics\\Universit{\`a} di Torino\\Via Carlo Alberto 10\\10123 Torino\\Italy}
\email{carlo.sanna.dev@gmail.com}
\author{M{\'a}rton Szikszai}
\address{Institute of Mathematics\\University of Debrecen\\P.O. Box 400.\\H-4002 Debrecen\\Hungary}
\email{szikszai.marton@science.unideb.hu}
\keywords{coprimality; covering; integer sequences; Pillai; polynomials}
\subjclass[2010]{Primary: 11A07, Secondary: 11C08.}
\title{A coprimality condition on consecutive values of polynomials}
\begin{document}

\begin{abstract}
Let $f\in\mathbb{Z}[X]$ be quadratic or cubic polynomial.
We prove that there exists an integer $G_f\geq 2$ such that for every integer $k\geq G_f$ one can find infinitely many integers $n\geq 0$ with the property that none of $f(n+1),f(n+2),\dots,f(n+k)$ is coprime to all the others.
This extends previous results on linear polynomials and, in particular, on consecutive integers.
\end{abstract}

\maketitle

\section{Introduction}
Let $s=(s(n))_{n\geq 1}^{\infty}$ be an arbitrary sequence of integers and define $g_s\geq 2$ to be the smallest integer such that one can find $g_s$ consecutive terms of $s$ with the property that none of them is coprime to all the others.
Similarly, let $G_s\geq 2$ denote the smallest integer such that for every $k\geq G_s$ one can find $k$ consecutive terms satisfying the above requirements.
The quantities $g_s$ and $G_s$ may or may not exist. For instance, the sequence of positive even integers has $g_s=G_s=2$, while for the sequence of prime numbers neither exists.
Note that the existence of $G_s$ implies that of $g_s$ and one has $g_s\leq G_s$. 
For less trivial examples see the paper of Hajdu and Szikszai~\cite{hajduszikszai}.

Erd\H{o}s~\cite{erdos} was the first to prove the existence of $G_s$ when $s$ is the sequence of natural numbers.
Later, the combined efforts of Pillai~\cite{pillai} and Brauer~\cite{brauer} gave a more explicit result, namely that $g_s=G_s=17$.
We note that interest in such a problem is twofold.
On one hand, Pillai aimed at the solution of the classical Diophantine problem whether the product of consecutive integers can be a perfect power.
While a complete answer was given by Erd\H{o}s and Selfridge~\cite{erdosselfridge}, Pillai~\cite{pillai2} himself proved, using his already mentioned result from~\cite{pillai}, that it cannot be if one take at most $16$ consecutive terms.
On the other hand, Brauer~\cite{brauer} made connection with his earlier paper~\cite{brauerzeitz} on an old problem, studied already by Legendre~\cite{legendre}, concerning prime gaps.
In fact, Erd\H{o}s~\cite{erdos} himself also studied prime distance of consecutive primes.
Here we would not like to go into further details on any of these relations.

Gradually, the study of $g_s$ and $G_s$ in various sequences, and their importance in analogous problems as the ones mentioned earlier, attracted an increased attention.
Evans~\cite{evans} considered the case when $s$ is an arithmetic progression and proved the existence of $G_s$.
Ohtomo and Tamari~\cite{ohtomotamari} derived the same, but also dealt with numerical aspects by showing that $G_s\leq 384$ for the sequence of odd integers.
The most recent progress is due to Hajdu and Saradha~\cite{hajdusaradha} who gave an effective upper bound on $G_s$ depending only on the difference of the progression together with a heuristic algorithm to find the exact value of it, whenever the number of prime factors of the difference is ``small''.

Observe that both the natural numbers and arithmetic progressions can be considered as consecutive values of linear polynomials.
Recently, Harrington and Jones~\cite{harringtonjones} studied quadratic sequences, that is, for some quadratic $f\in\mathbb{Z}[X]$ one has $s(n)=f(n)$ for every $n\geq 1$.
They computed the exact value of $g_s$ when $f$ is monic or when it belongs to some special families of nonmonic polynomials.
Further, they conjectured that $g_s$ exists and that $g_s\leq 35$ for every quadratic polynomial.
However, they did not consider $G_s$ to any extent.

In this paper, we considerably extend the previous results.
Before stating our result we note that throughout the paper we use the notation $g_f=g_s$ and $G_f=G_s$ and write about consecutive values of the polynomial $f$ instead of consecutive terms of the corresponding sequence $s$.
The main theorem is as follows.

\begin{thm}\label{thm:main}
If $f\in\mathbb{Z}[X]$ is quadratic or cubic, then $G_f$ exists. Further, for every $k\geq G_f$ one can find infinitely many integer $n\geq 0$ such that $f(n+1),f(n+2),\dots,f(n+k)$ has the property that none of them is coprime to all the others.
\end{thm}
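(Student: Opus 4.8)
The plan is to translate the coprimality requirement into a covering condition on the positions $\{1,\dots,k\}$ and then to realise that covering by prescribing $n$ modulo suitably many primes through the Chinese Remainder Theorem. Observe first that a prime $p$ divides two of the values $f(n+i)$ and $f(n+j)$ exactly when both $n+i$ and $n+j$ are roots of $f$ modulo $p$. Hence, once $n$ is fixed modulo $p$, the set $S_p=\{\,i\in\{1,\dots,k\}:p\mid f(n+i)\,\}$ is the union of the at most $\deg f$ residue classes modulo $p$ cut out by the roots of $f$, and \emph{every} element of $S_p$ is linked to every other one as soon as $|S_p|\ge 2$. So it suffices to choose the residues of $n$ modulo a finite family of primes so that each position lies in some $S_p$ with $|S_p|\ge 2$. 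Two mechanisms are available: a prime $p$ with a root $r$ links all positions in the single class $i\equiv r-n\pmod p$ (useful when $p\le k/2$, so that the class meets the window at least twice), whereas a prime $p\mid\Res(f(X),f(X+d))$ links a pair of positions at distance $d$ through two \emph{distinct} roots of $f$. For a quadratic $f(X)=aX^2+bX+c$ one computes $\Res(f(X),f(X+d))=a^2d^2\bigl(a^2d^2-(b^2-4ac)\bigr)$, so the relevant distinct-root primes are the divisors of $a^2d^2-(b^2-4ac)$; the cubic case produces an analogous, larger, but still explicit polynomial in $d$.

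The construction I would carry out has two stages. First, a \emph{bulk} cover: using the classical estimate $\sum_{p\le x}\omega_f(p)/p\sim c_f\log\log x$, where $\omega_f(p)$ is the number of roots of $f$ modulo $p$ and $c_f\ge 1$ is the number of distinct irreducible factors of $f$ (a consequence of the Frobenius density theorem), the product $\prod_{p\le k/2}\bigl(1-\omega_f(p)/p\bigr)$ tends to $0$. Choosing the residue of $n$ modulo every usable prime $p\le k/2$, one can therefore arrange that the set $U$ of positions lying in no class of size at least two has density tending to $0$, hence $|U|=o(k)$ as $k\to\infty$; every position outside $U$ is then genuinely linked. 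In the second stage I would clear $U$ position by position: for each $i\in U$ I would select a distance $d$ with $i+d\in\{1,\dots,k\}$ and a \emph{large} prime $p$ dividing $a^2d^2-(b^2-4ac)$ (respectively the cubic resultant factor), distinct from all primes already committed, and impose $n\equiv r-i\pmod p$ for the corresponding root $r$; this forces $p\mid f(n+i)$ and $p\mid f(n+i+d)$, linking $i$. Since the integers $\Res(f(X),f(X+d))$ for $d\le k$ are nonzero and of polynomial size, they furnish far more than $|U|$ admissible fresh prime factors, so a counting argument guarantees the selections can be made, and as all moduli are distinct the whole system of congruences on $n$ is simultaneously solvable.

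The resulting conditions define a full residue class of $n$ modulo the product of the primes used, so they are met by infinitely many $n\ge 0$; performing the construction for every sufficiently large $k$ yields a threshold, establishing the existence of $G_f$. The main obstacle is the second stage, namely forcing \emph{every} position to acquire a partner: the density estimate effortlessly covers almost all positions, but the residual set must be eliminated completely, and this is exactly where control of the resultants $\Res(f(X),f(X+d))$ is essential — one must know, uniformly in $k$, that these values are nonzero, grow only polynomially, and still possess prime factors outside the finite set already fixed, while checking that the simple-root and distinct-root congruences never conflict. It is this residual analysis, together with the bookkeeping for the Chinese Remainder Theorem, that keeps the argument tied to low degree: for quadratic and cubic $f$ the root configurations and the factorisations of the resultant stay simple enough to manage explicitly, whereas in higher degree the clearing of $U$ becomes the genuine difficulty.
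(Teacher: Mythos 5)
Your overall architecture --- covering the positions $\{1,\dots,k\}$ by two mechanisms (small primes capturing whole residue classes, large ``resultant'' primes linking pairs at a prescribed distance) and then gluing with the Chinese Remainder Theorem --- is exactly the paper's strategy. But the step you treat as routine is the crux, and as stated it fails. From $p\mid\Res_X(f(X),f(X+d))$ you may only conclude that $f(X)$ and $f(X+d)$ have a common irreducible factor modulo $p$, i.e.\ a common root in $\overline{\mathbb{F}}_p$; to impose your congruence $n\equiv r-i\pmod p$ you need that common root to lie in $\mathbb{F}_p$. For quadratics this is automatic: $p\mid a^2d^2-\Delta_f$ says $\Delta_f$ is a square modulo $p$, so both roots are rational. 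For cubics it is not: a priori the common factor could be an irreducible cubic, yielding no usable residue $r$ at all. The paper devotes Lemma~\ref{lem:closeroots} to precisely this point, deducing from $p\mid\widetilde{f}(d)$ that $\Delta_f$ is a square modulo $p$ and then invoking the Stickelberger parity theorem to produce rational roots at distance $d$. Your remark that ``the cubic case produces an analogous, larger, but still explicit polynomial in $d$'' mistakes this for a computation. Moreover, the rationality statement is genuinely false one degree higher: for $f=X^4+1$ one has $3\mid\widetilde{f}(1)$, yet $f$ has no root modulo $3$ whatsoever (the paper's remark after Lemma~\ref{lem:closeroots}). This, not bookkeeping complexity, is why the theorem stops at degree~$3$.

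The counting claim in your clearing stage is also unsupported at its critical point. That the values $\widetilde{f}(d)$, $d\le k$, are nonzero and of polynomial size does not give them more than $|U|$ distinct prime factors outside the committed set: a priori essentially the whole logarithm of $\prod_{d\le k}\widetilde{f}(d)$, which is of order $k\log k$, could be carried by small primes with high multiplicity. Ruling this out is the content of the paper's Lemma~\ref{lem:primdiv}, which requires (i) the $p$-adic valuation asymptotics $\nu_p\bigl(\prod_{d\le N}\widetilde{f}(d)\bigr)=t N/(p-1)+O(\log N/\log p)$, (ii) an effective Chebotarev/Frobenius input (Theorem~\ref{thm:density}) giving $\sum_{p\in\mathcal{P}_{\widetilde{f}}(N)}\log p/(p-1)=\delta_{\widetilde{f}}\log N+O(1)$, and, crucially, (iii) the strict inequality $\delta_{\widetilde{f}}<1$. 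Point (iii) is not formal, since $\widetilde{f}$ has degree $k(k-1)$ and is far from irreducible; the paper proves it by showing $f$ and $\widetilde{f}$ have the same splitting field (Lemma~\ref{lem:samegalois}), whence $\delta_{\widetilde{f}}=\delta_f\in\{1/2,2/3,1/3\}$ for irreducible $f$. This is also why the paper first reduces to irreducible $f$ --- a case split absent from your write-up: for reducible $f$ one has $\delta_f=1$, the count collapses, and the paper instead settles that case by Evans' theorem applied to a linear factor. Two smaller repairs: your bulk bound $|U|=o(k)$ does not follow from $\prod_{p\le k/2}\bigl(1-\omega_f(p)/p\bigr)\to 0$ by inclusion--exclusion (the Legendre-sieve error $2^{\pi(k/2)}$ swamps $k$); it would need the fundamental lemma of sieve theory, whereas the paper avoids sieving by fixing a single finite set $p_1<\dots<p_s\in\mathcal{P}_f$ with $\prod_{i}(1-1/p_i)$ below a constant proportional to the supply of large primes, which exists because $\sum_{p\in\mathcal{P}_f}1/p=\infty$. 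And linking $i$ only to $i+d$ fails for $i$ near $k$; one must cap the distances at $k/2$ and link forwards or backwards according to whether $i\le k/2$, as the paper does with its choice of $z_{q_j}^{-}$ versus $z_{q_j}^{+}$.
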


Observe that Theorem \ref{thm:main} allows us to immediately settle one part of the conjecture made by Harrington and Jones~\cite{harringtonjones} on $g_f$.

\begin{cor}
\label{cor:harjon}
If $f\in\mathbb{Z}[X]$ is quadratic, then $g_f$ exists.
\end{cor}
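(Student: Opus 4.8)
The plan is to deduce the corollary directly from Theorem \ref{thm:main} together with the elementary relation $g_s \leq G_s$ recorded in the introduction; no new work beyond the main theorem is required.

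First I would observe that the quadratic hypothesis of the corollary is simply a special case of the hypothesis of Theorem \ref{thm:main}, which covers both quadratic and cubic polynomials. Hence the theorem applies verbatim to any quadratic $f$, and we conclude that $G_f$ exists.

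Next I would invoke the general principle, noted in the introduction, that the existence of $G_s$ forces the existence of $g_s$, with $g_s \leq G_s$. Concretely: by the definition of $G_f$, for every integer $k \geq G_f$ there is at least one block of $k$ consecutive values of $f$ none of which is coprime to all the others; taking $k = G_f$ already exhibits such a block. Therefore the set of integers $k \geq 2$ for which a block of $k$ consecutive values with the required property exists is nonempty, and $g_f$, being by definition the smallest element of this set, is well defined. This establishes the existence of $g_f$.

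I do not expect any genuine obstacle here, since the entire mathematical content is carried by Theorem \ref{thm:main} and the corollary is a purely formal consequence. The only point one must check is precisely the one flagged above, namely that a quadratic polynomial indeed satisfies the hypothesis of the theorem; once this is acknowledged, the conclusion is immediate and settles the existence portion of the Harrington--Jones conjecture~\cite{harringtonjones}.
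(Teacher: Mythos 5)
Your proposal is correct and matches the paper's reasoning exactly: the paper treats the corollary as an immediate consequence of Theorem~\ref{thm:main} combined with the introductory observation that the existence of $G_s$ implies that of $g_s$ with $g_s \leq G_s$. Nothing further is needed.
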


Here we do not consider the absolute boundedness of $g_f$, but make some remarks on it instead.
For every positive integer $k\geq 2$, there exists a quadratic polynomial $f\in\mathbb{Z}[X]$ reducible in $\mathbb{Z}[X]$ such that $k\leq g_f\leq G_f$.
This follows easily by taking $d$ to be the product of the first $k$ primes and then looking at the polynomial $f(X)=(1+dX)^2$.
On one hand we have $g_f=g_{1+dX}$ and $G_f=G_{1+dX}$, while on the other we have $k\leq g_{1+dX}\leq G_{1+dX}$.
Neverthless, we could not say anything about the irreducible case and we feel that, despite not stating it anywhere and not excluding reducibles before, Harrington and Jones made their conjecture on this more interesting setting.

Let us finish this section by discussing the main tools we use in the proof of Theorem \ref{thm:main}.
The basic idea is to construct for every quadratic or cubic polynomial $f$ an auxiliary polynomial $\tilde{f}$ that, in some sense, controls the existence of ``close'' solutions to polynomial congruences $f(X)\equiv 0 \pmod{p}$.
Then we show that if $k$ is desirably large, one has enough primes with such close solutions to ``cover'' some block of $k$ consecutive numbers $f(n+1),f(n+2),\dots,f(n+k)$.
The success of this construction relies on the Stickelberger parity theorem, results on the $p$-adic valuations of products of consecutive polynomial values, and lower bounds on the number of certain subsets of primes.

Note that our methods can yield, at least in principle, an effective upper bound on $G_f$.
However, the bound would be too large to be useful in practice.
Further, we emphasize that Theorem~\ref{thm:main} implies the existence of $G_f$ for every quartic polynomial $f\in\mathbb{Z}[x]$ that is reducible in $\mathbb{Z}[X]$ (we always have a factor of degree at most $3$), but our construction already fails to deal with quartic polynomials in general.
We point out this more explicitly in the next section.
Neverthless, the above observations raise two natural questions.

\begin{qus}
Let $f\in\mathbb{Z}[X]$ be of degree at least $4$ and irreducible over $\mathbb{Z}$. Does Theorem~\ref{thm:main} extend to some family of such polynomials?
\end{qus}

\begin{qus}
Does there exist an efficient algorithm that, taken as input a quadratic or cubic polynomial $f\in\mathbb{Z}[x]$, returns $G_f$, or at least a good upper bound for $G_f$?
\end{qus}

\section{Preliminaries}

This section is devoted to the auxiliary results we use in the proof of Theorem \ref{thm:main}.
First, let us fix some notations.
The letter $p$ always denotes a prime number.
For any $x \geq 1$ and for any set of integers $\mathcal{S}$, we put $\mathcal{S}(x) := \mathcal{S} \cap [1, x]$.
We also use the Landau--Bachmann ``Big Oh'' notation $O$ and the associated Vinogradov symbols $\ll$ and $\gg$.
In particular, any dependence of the implied constants is indicated either with subscripts or explicitly stated.
Let
\begin{equation*}
f(X) = a_k X^k + a_{k - 1} X^{k - 1} + \cdots + a_0 ,
\end{equation*}
be a polynomial of degree $k \geq 1$ and with integer coefficients $a_0, \ldots, a_k$.
We define
\begin{equation}\label{equ:ftilde}
\widetilde{f}(X) := a_k^{2k - 2} \prod_{\substack{1 \leq i, j \leq k \\ i \neq j}} (X - (\alpha_i - \alpha_j)) ,
\end{equation}
where $\alpha_1, \ldots, \alpha_k$ are all the roots of $f$ in some algebraic closure.
Observe that $\widetilde{f}$ can be computed from the relation
\begin{equation*}
\Res_X(f(X), f(X + Y)) = a_k^2 Y^k \widetilde{f}(Y) ,
\end{equation*}
where $\Res_X$ is the resultant of polynomials respect to $X$.
In particular, for $k = 2$
\begin{equation}\label{equ:ftilde2}
\widetilde{f}(X) = a_2^2 X^2 - \Delta_f ,
\end{equation}
while for $k = 3$
\begin{equation}\label{equ:ftilde3}
\widetilde{f}(X) = \left(a_3^2 X^2 + 3a_1a_3 - a_2^2\right)^2 X^2 - \Delta_f ,
\end{equation}
where $\Delta_f$ denotes the discriminant of $f$.
We have the following simple, but useful property.

\begin{lem}\label{lem:samegalois}
If $f \in \mathbb{Z}[X]$ is a nonconstant polynomial, then $f$ and $\widetilde{f}$ have the same Galois group over $\mathbb{Q}$.
\end{lem}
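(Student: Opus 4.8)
The plan is to prove the stronger statement that $f$ and $\widetilde{f}$ have the \emph{same} splitting field over $\mathbb{Q}$; since the Galois group of a polynomial is by definition the Galois group of its splitting field, the claim follows at once. Write $\alpha_1, \ldots, \alpha_k$ for the roots of $f$ in a fixed algebraic closure, let $K = \mathbb{Q}(\alpha_1, \ldots, \alpha_k)$ be the splitting field of $f$, and let $L$ be the splitting field of $\widetilde{f}$. I would first record that, by the definition in \eqref{equ:ftilde}, the roots of $\widetilde{f}$ are exactly the differences $\alpha_i - \alpha_j$ with $i \neq j$, so $L = \mathbb{Q}\big(\{\alpha_i - \alpha_j : i \neq j\}\big)$; in particular $\widetilde{f} \in \mathbb{Q}[X]$, since the set of these differences is stable under every permutation of the $\alpha_i$.

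The inclusion $L \subseteq K$ is immediate, as each difference $\alpha_i - \alpha_j$ lies in $K$ and $L$ is generated over $\mathbb{Q}$ by these differences. The heart of the matter is the reverse inclusion $K \subseteq L$, which amounts to recovering each individual root $\alpha_i$ from the collection of differences. Here I would use that $f$ has rational coefficients: the sum of the roots satisfies $\sum_{j=1}^{k} \alpha_j = -a_{k-1}/a_k \in \mathbb{Q}$. Averaging the differences emanating from a fixed root gives
\[
\frac{1}{k} \sum_{j=1}^{k} (\alpha_i - \alpha_j) = \alpha_i - \frac{1}{k}\sum_{j=1}^{k}\alpha_j ,
\]
whose left-hand side lies in $L$ (being a $\mathbb{Q}$-linear combination of roots of $\widetilde{f}$, the term with $j=i$ vanishing) and whose subtracted term lies in $\mathbb{Q} \subseteq L$. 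Hence $\alpha_i \in L$ for every $i$, so $K \subseteq L$ and therefore $K = L$.

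The only delicate point is this reverse inclusion, and I expect it to be the crux: it hinges on the rationality of $\sum_j \alpha_j$ (which is where the hypothesis $f \in \mathbb{Z}[X]$, as opposed to an arbitrary polynomial, enters) together with the harmless division by the degree $k$ in characteristic zero. The degenerate case $k = 1$, in which $\widetilde{f}$ reduces to a constant, is handled separately and trivially, both Galois groups being trivial. With $K = L$ established, the two polynomials share a splitting field and hence define one and the same Galois group $\operatorname{Gal}(K/\mathbb{Q})$ over $\mathbb{Q}$, completing the proof.
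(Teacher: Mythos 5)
Your proof is correct and takes essentially the same route as the paper: the paper's entire argument is the single identity $\alpha_i = \frac{1}{k}\bigl(\sum_{j=1}^{k}(\alpha_i-\alpha_j) - \frac{a_{k-1}}{a_k}\bigr)$, which is exactly your averaging identity, giving equality of splitting fields and hence of Galois groups. Your write-up merely makes explicit the easy inclusion $L \subseteq K$ and the $k=1$ degeneracy, which the paper leaves implicit.
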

\begin{proof}
The identity
\begin{equation*}
\alpha_i = \frac1{k}\left(\sum_{j = 1}^k (\alpha_i - \alpha_j) - \frac{a_{k-1}}{a_k}\right) \quad i = 1, \ldots, k ,
\end{equation*}
implies that $f$ and $\widetilde{f}$ have the same splitting field over $\mathbb{Q}$, and hence the same Galois group.
\end{proof}

The next result deals with another interesting connection between $f$ and $\tilde{f}$, namely it relates $\widetilde{f}$ to ``close'' solutions of the congruence $f(X) \equiv 0 \pmod p$.

\begin{lem}\label{lem:closeroots}
Let $f \in \mathbb{Z}[X]$ be of degree $k=2$ or $3$ and suppose that $p \mid \widetilde{f}(r)$ for some prime number $p \nmid 2a_k$ and some positive integer $r$.
Then there exists an integer $n$ such that 
\begin{equation*}
f(n) \equiv f(n + r) \equiv 0 \pmod p .
\end{equation*}
\end{lem}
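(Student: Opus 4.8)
The plan is to translate the congruence into a statement about common roots modulo $p$ via the resultant identity, and then to do the real work: promoting a common root in $\overline{\mathbb F}_p$ to one in $\mathbb F_p$. Since $p \nmid a_k$, the reduction $\overline{f} \in \mathbb F_p[X]$ still has degree $k$, so reducing the identity $\Res_X(f(X), f(X+Y)) = a_k^2 Y^k \widetilde f(Y)$ modulo $p$ and specializing $Y = r$ is legitimate and gives
\[
\Res_X\!\big(\overline{f}(X), \overline{f}(X+r)\big) = \overline{a_k^2\, r^k\, \widetilde f(r)} = 0 \quad\text{in } \mathbb F_p,
\]
because $p \mid \widetilde f(r)$. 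Hence $\overline{f}(X)$ and $\overline{f}(X+r)$ share a root $\beta \in \overline{\mathbb F}_p$; equivalently, $\beta$ and $\beta+r$ are both roots of $\overline{f}$. If $\beta$ can be taken in $\mathbb F_p$, then lifting it to an integer $n$ yields $f(n) \equiv \overline{f}(\beta) = 0$ and $f(n+r) \equiv \overline{f}(\beta+r) = 0 \pmod p$, as desired. So everything reduces to the \emph{rationality} of the common root.

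For $k = 2$ this can be made completely explicit. Formula \eqref{equ:ftilde2} gives $\widetilde f(r) = a_2^2 r^2 - \Delta_f$, so $p \mid \widetilde f(r)$ says precisely that $\Delta_f \equiv (a_2 r)^2 \pmod p$ is a square modulo $p$. Since $p \nmid 2a_2$, the quadratic formula then produces the two roots $(-a_1 \pm a_2 r)/(2a_2)$ of $\overline{f}$ in $\mathbb F_p$, and these differ by exactly $r$. Taking $n \equiv (-a_1 - a_2 r)/(2a_2) \pmod p$ gives $f(n) \equiv f(n+r) \equiv 0 \pmod p$ with no further argument needed.

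For $k = 3$ the rationality is the heart of the matter, and I would split on whether $r$ vanishes modulo $p$. If $r \equiv 0 \pmod p$, then $\widetilde f(r) \equiv \widetilde f(0) = -\Delta_f$ (using \eqref{equ:ftilde3}), so $p \mid \Delta_f$ and $\overline{f}$ has a repeated root; a repeated root of a cubic over $\mathbb F_p$ is fixed by the Frobenius map $\phi\colon x \mapsto x^p$ and hence lies in $\mathbb F_p$, so we may take such a $\beta \in \mathbb F_p$, and $f(n) \equiv f(n+r) \equiv 0$ for $n \equiv \beta$. If $r \not\equiv 0$, I would rule out the two factorization types of $\overline{f}$ that lack enough rational roots, using that $\phi$ permutes the roots and fixes $r$. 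The type ``one root in $\mathbb F_p$ plus an irreducible quadratic factor'' is impossible: for the conjugate pair $\sigma, \sigma^p$ one has $\phi(\sigma - \sigma^p) = -(\sigma - \sigma^p)$, which (as $p \neq 2$) cannot be a nonzero element of $\mathbb F_p$, and the other two differences are visibly non-rational, so no pair of roots differs by $r \in \mathbb F_p^{\times}$. If $\overline{f}$ were irreducible with roots $\gamma, \gamma^p, \gamma^{p^2}$, then applying a power of $\phi$ reduces a difference relation to $\gamma^{p^c} - \gamma = r$ for some $c \in \{1,2\}$; iterating $\phi$ and using $\gamma^{p^3} = \gamma$ forces $3r \equiv 0 \pmod p$, i.e.\ $p = 3$. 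Excluding $p = 3$, the only surviving possibility is that $\overline{f}$ splits completely over $\mathbb F_p$, so all its roots, and in particular $\beta$, lie in $\mathbb F_p$.

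I expect the last step to be the main obstacle, and it is a genuine one rather than an artifact of the method: the cubic argument really needs $p \neq 3$. For $f(X) = X^3 - X - 1$ one computes $\widetilde f(1) = 27$, so $3 \mid \widetilde f(1)$ and $3 \nmid 2a_3 = 2$, yet $f(n) \equiv 2 \pmod 3$ for every integer $n$, so no admissible $n$ exists. The obstruction is exactly an irreducible cubic whose three roots form an arithmetic progression in characteristic $3$. Consequently I would prove the statement under the slightly stronger hypothesis $p \nmid 6a_k$ (equivalently, $p \neq 3$ in the cubic case); away from $p = 3$ the case analysis above closes the argument, and the prime $p = 3$, when it needs to be accommodated, should be handled separately in the global construction.
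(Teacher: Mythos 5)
Your proposal is correct, and it does more than prove the lemma: it exposes that the lemma as stated is \emph{false}, and that the paper's own proof contains an error. Your counterexample checks out: for $f(X) = X^3 - X - 1$ one has $\Delta_f = -23$, so by (\ref{equ:ftilde3}) we get $\widetilde{f}(X) = (X^2 - 3)^2X^2 + 23$ and $\widetilde{f}(1) = 27$; thus $p = 3$, $r = 1$ satisfy every hypothesis (note $3 \nmid 2a_3 = 2$), yet $f(n) \equiv n^3 - n - 1 \equiv -1 \pmod{3}$ for all integers $n$ by Fermat's little theorem, so no admissible $n$ exists. Concretely, modulo $3$ this $f$ is irreducible with roots $\alpha, \alpha+1, \alpha+2$ in $\mathbb{F}_{27}$: differences of roots lie in $\mathbb{F}_3$ while no root does. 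The flaw in the paper's proof is the sentence deducing from the Stickelberger parity theorem that ``$f$ has at least one root in $\mathbb{F}_p$'': for a cubic that is squarefree modulo $p$, a \emph{square} discriminant forces the number of irreducible factors to be odd, i.e.\ $f$ is either totally split or irreducible modulo $p$ (it is a \emph{nonsquare} discriminant that would guarantee exactly one rational root). The irreducible case is exactly the one your telescoping identity $3r = 0$ eliminates when $p \neq 3$, and exactly the one that cannot be eliminated when $p = 3$.

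Your route is also genuinely different from the paper's, in a way that matters: you never invoke Stickelberger, instead ruling out factorization type $(1,2)$ directly via $\phi(\sigma - \sigma^p) = -(\sigma - \sigma^p)$ with $p$ odd, and type $(3)$ via the telescoping relation; this is precisely what makes the $p = 3$ obstruction visible rather than letting it slip through, and your repaired hypothesis $p \nmid 6a_k$ is the right fix. Your assessment of the global damage is also accurate: in the proof of Theorem~\ref{thm:main} the lemma is applied only to primes $p \in \mathcal{S}_N$ with $p > N/2$ for $N$ sufficiently large, and to shifts $r \leq N/2 < p$, so $p \nmid 6a_k$ holds automatically there and the main theorem is untouched; the case $p = 3$ never needs to be ``handled separately in the global construction'' because it never arises in it.
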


\begin{proof}
Let $\alpha_1, \ldots, \alpha_k$ be the roots of $f$ in the algebraic closure of the finite field $\mathbb{F}_p$.
Since $p \mid \widetilde{f}(r)$, by (\ref{equ:ftilde}) we can assume that $\alpha_1 - \alpha_2 = r$, where $r$ is considered as an element of $\mathbb{F}_p$.
If $k = 2$, then from (\ref{equ:ftilde2}) we have that $\Delta_f$ is a square modulo $p$ and, considering $p \nmid 2a_2$, this implies that $\alpha_1, \alpha_2 \in \mathbb{F}_p$ and the claim follows.
If $k = 3$, then by (\ref{equ:ftilde3}) we once again deduce that $\Delta_f$ is a square modulo $p$ and, by the Stickelberger parity theorem~\cite[Theorem~6.68]{berlekamp}, it follows that $f$ has at least one root in $\mathbb{F}_p$.
If $\alpha_1 \in \mathbb{F}_p$ or $\alpha_2 \in \mathbb{F}_p$, then $\alpha_1, \alpha_2 \in \mathbb{F}_p$, and we are done.
If $\alpha_3 \in \mathbb{F}_p$, then $\alpha_1 = 2^{-1}(r -a_1 - \alpha_3) \in \mathbb{F}_p$ and $\alpha_2 = \alpha_1 - r \in \mathbb{F}_p$, and we are done again.
\end{proof}

\begin{rem}
Note that the conclusion of Lemma~\ref{lem:closeroots} is no longer true if the hypothesis on the degree is dropped.
Take for instance, $f(X) = X^4 + 1$. We have that $3 \mid \widetilde{f}(1)$, but the congruence $f(X) \equiv 0 \bmod 3$ has no solutions at all.
\end{rem}

Now for any nonconstant polynomial $f \in \mathbb{Z}[X]$ we define
\begin{equation*}
\mathcal{P}_f := \{p : p \mid f(n) \text{ for some } n \in \mathbb{N}\} .
\end{equation*}
It is well-known that $\mathcal{P}_f$ has a positive relative density $\delta_f$ in the set of prime numbers.
More precisely, the Frobenius density theorem says that $\delta_f = \operatorname{Fix}(\mathcal{G}) / \#\mathcal{G}$, where $\mathcal{G}$ is the Galois group of $f$ over $\mathbb{Q}$, and $\operatorname{Fix}(\mathcal{G})$ is the number of elements of $\mathcal{G}$ which have at least one fixed point, when regarded as permutations of the roots of $f$~(see, e.g., \cite{stevenhagenlenstra}).
We need the following asymptotic formula for $\#\mathcal{P}_f(x)$.

\begin{thm}\label{thm:density}
For any nonconstant polynomial $f \in \mathbb{Z}[X]$, we have
\begin{equation*}
\#\mathcal{P}_f(x) = \delta_f \Li (x) + O_f\!\left(\frac{x}{\exp(C_f \sqrt{\log x})}\right)
\end{equation*}
for all $x \geq 2$, where $\Li$ denotes the logarithmic integral function and $C_f > 0$ is a constant depending on $f$ only.
\end{thm}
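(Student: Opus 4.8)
The plan is to derive this from the Chebotarev density theorem with a classical error term, applied to the splitting field of $f$. Replacing $f$ by its radical changes $\mathcal{P}_f$ by at most finitely many primes, so we may assume $f$ is squarefree. Let $K$ be the splitting field of $f$ over $\mathbb{Q}$ and $\mathcal{G} = \operatorname{Gal}(K/\mathbb{Q})$, viewed as a permutation group of the roots $\alpha_1, \ldots, \alpha_k$. Discarding the finitely many primes dividing $a_k$ or ramifying in $K$ (at the cost of an $O_f(1)$ term), for every remaining prime $p$ the factorization type of $f$ modulo $p$ equals the cycle type of the Frobenius class $\operatorname{Frob}_p$ acting on the roots; hence $p \in \mathcal{P}_f$, i.e.\ $f$ has a root modulo $p$, if and only if $\operatorname{Frob}_p$ fixes at least one root. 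Setting $S := \{g \in \mathcal{G} : g \text{ fixes some } \alpha_i\}$, which is a union of conjugacy classes of $\mathcal{G}$ with $\#S = \operatorname{Fix}(\mathcal{G})$, we thus have $p \in \mathcal{P}_f \iff \operatorname{Frob}_p \in S$ up to finitely many primes.

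Next I would invoke the effective Chebotarev density theorem for the \emph{fixed} field $K$. For each conjugacy class $C \subseteq \mathcal{G}$ one has
\[
\pi_C(x) := \#\{p \leq x : \operatorname{Frob}_p = C\} = \frac{\#C}{\#\mathcal{G}} \Li(x) + O_f\!\left(\frac{x}{\exp(C_f\sqrt{\log x})}\right),
\]
with a constant $C_f > 0$ depending only on $f$. This is the form of the theorem obtained from the zero-free region of the Dedekind zeta function $\zeta_K$ together with its factorization into Artin $L$-functions. The only point requiring care is a possible exceptional (Siegel) real zero of $\zeta_K$ near $s = 1$; however, since $K$ is fixed, such a zero---if it exists at all---lies at some fixed $\beta_0 < 1$, so its contribution is $O(x^{\beta_0}) = O(x^{1 - \delta_f})$ for some $\delta_f > 0$, which is dominated by the stated error term because $x^{1 - \delta_f} = x\exp(-\delta_f \log x)$ decays faster than $x\exp(-C_f\sqrt{\log x})$. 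It is precisely here that allowing all constants to depend on $f$ disposes of what would otherwise be the main obstacle.

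Finally I would sum the above over the (at most $\#\mathcal{G}$) conjugacy classes contained in $S$. The main terms combine to give
\[
\sum_{C \subseteq S} \frac{\#C}{\#\mathcal{G}} \Li(x) = \frac{\#S}{\#\mathcal{G}} \Li(x) = \frac{\operatorname{Fix}(\mathcal{G})}{\#\mathcal{G}} \Li(x) = \delta_f \Li(x),
\]
using the Frobenius density formula recalled above, while the finitely many $O_f$ error terms add up to a single error of the same shape. Restoring the $O_f(1)$ contribution of the discarded primes yields the claimed asymptotic. The essential input is thus the effective Chebotarev theorem for a fixed number field; the only genuine subtlety, the exceptional zero, is harmless precisely because $K$, and hence every implied constant, is permitted to depend on $f$.
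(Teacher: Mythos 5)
Your proof is correct and takes essentially the same route as the paper: the paper's proof consists of a single citation of the effective Chebotarev density theorem (Serre, Theorem~3.4) applied to the splitting field of $f$, which is precisely the argument you spell out in detail. The points you elaborate --- the Frobenius fixed-point criterion via Dedekind's theorem, discarding ramified primes, and the harmlessness of a possible exceptional zero since the field $K$ is fixed --- are exactly what is implicit in that citation.
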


\begin{proof}
The formula is a direct consequence of the effective version of the Chebotarev density theorem~\cite[Theorem~3.4]{serre}.
\end{proof}

For each prime number $p$, let $\nu_p$ be the usual $p$-adic valuation.
The next lemma concerns the $p$-adic valuation of products consisting of consecutive values of a polynomial.

\begin{lem}\label{lem:padicQN}
Let $f \in \mathbb{Z}[X]$ be a polynomial without roots in $\mathbb{N}$, and set
\begin{equation}\label{equ:QN}
Q_N := \prod_{n = 1}^N f(n) ,
\end{equation}
for all positive integers $N$.
Then, for any prime number $p$, we have
\begin{equation*}
\nu_p(Q_N) = \frac{t_f N}{p - 1} + O_f\!\left(\frac{\log N}{\log p}\right) ,
\end{equation*}
for all integers $N \geq 2$, where $t_f$ is the number of roots of $f$ in the $p$-adic integers.
\end{lem}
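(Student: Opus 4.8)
The plan is to compute the valuation term by term, writing $\nu_p(Q_N)=\sum_{n=1}^N\nu_p(f(n))$, and then to count, for each prime power, how often it divides a value of $f$. Using the elementary identity $\nu_p(m)=\#\{\,j\ge 1:p^j\mid m\,\}$ (valid since $f$ has no root in $\mathbb{N}$, so every $f(n)\neq 0$) and interchanging the order of summation, one gets
\[
\nu_p(Q_N)=\sum_{j\ge 1}S_j,\qquad S_j:=\#\{\,1\le n\le N:f(n)\equiv 0\pmod{p^j}\,\}.
\]
Writing $\rho(p^j)$ for the number of roots of $f$ modulo $p^j$ and splitting $[1,N]$ into residue classes modulo $p^j$, periodicity gives $S_j=\rho(p^j)\,N/p^j+O(\rho(p^j))$. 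Moreover $1\le|f(n)|\ll_f N^{k}$, so $p^j\mid f(n)$ forces $p^j\le|f(n)|\ll_f N^{k}$; hence $S_j=0$ once $j>J$ for some $J\ll_f\log N/\log p$, and only these finitely many $j$ contribute.

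The heart of the matter is to evaluate $\sum_{j\ge 1}\rho(p^j)/p^j$ and identify it with $t_f/(p-1)$. I would factor $f=a_k\prod_{i=1}^k(X-\alpha_i)$ over an algebraic closure of $\mathbb{Q}_p$. Then $\rho(p^j)/p^j$ is the normalized Haar measure of $\{x\in\mathbb{Z}_p:\nu_p(f(x))\ge j\}$, so summing the layer‑cake identity again,
\[
\sum_{j\ge 1}\frac{\rho(p^j)}{p^j}=\int_{\mathbb{Z}_p}\nu_p(f(x))\,d\mu(x)=\sum_{i=1}^k\int_{\mathbb{Z}_p}\nu_p(x-\alpha_i)\,d\mu(x),
\]
using $p\nmid a_k$. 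A root $\alpha_i\in\mathbb{Z}_p$ contributes, by translation invariance, $\int_{\mathbb{Z}_p}\nu_p(y)\,d\mu(y)=\sum_{v\ge 1}p^{-v}=1/(p-1)$; summing over the $t_f$ roots lying in $\mathbb{Z}_p$, counted with multiplicity, yields the desired $t_f/(p-1)$.

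The step that requires genuine care, and which I expect to be the main obstacle, is the contribution of the roots $\alpha_i$ lying \emph{outside} $\mathbb{Z}_p$. Such a root contributes $\int_{\mathbb{Z}_p}\nu_p(x-\alpha_i)\,d\mu$, and this integral vanishes precisely when the reduction of $\alpha_i$ does not lie in $\mathbb{F}_p$; when $\alpha_i$ sits in a ramified extension whose reduction falls back into $\mathbb{F}_p$ the integral is positive and corrupts the constant (for instance $f=X^2+1$ at $p=2$, where $\nu_2(f(n))=1$ for all odd $n$ although $f$ has no root in $\mathbb{Z}_2$). By Hensel's lemma a simple root of $\bar f$ in $\mathbb{F}_p$ always lifts to $\mathbb{Z}_p$, so for every prime $p\nmid a_k\Delta_f$ each root outside $\mathbb{Z}_p$ reduces outside $\mathbb{F}_p$ and contributes nothing. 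These are exactly the primes relevant to the application, namely those furnished by Lemma~\ref{lem:closeroots} (where $p\nmid 2a_k$ and $\Delta_f$ is a nonzero square modulo $p$), so I would establish the identity for such $p$.

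Finally I would assemble the estimate. For $p\nmid a_k\Delta_f$ the reduction $\bar f$ is separable, so Hensel's lemma gives $\rho(p^j)=\rho(p)=t_f\le k$ for all $j\ge 1$. Then the accumulated per‑level error is $\sum_{j\le J}O(\rho(p^j))\ll_f k\,J\ll_f\log N/\log p$, while the discarded tail satisfies $N\sum_{j>J}\rho(p^j)/p^j\ll_f N\,p^{-J}\ll_f 1$; a short case split according to whether $p$ is small or large relative to $N^{k}$ shows this $O_f(1)$ is absorbed into $O_f(\log N/\log p)$. Combining these with $\nu_p(Q_N)=N\sum_{j\le J}\rho(p^j)/p^j+O\!\big(\sum_{j\le J}\rho(p^j)\big)$ gives $\nu_p(Q_N)=t_fN/(p-1)+O_f(\log N/\log p)$, as claimed.
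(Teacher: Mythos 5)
Your proposal occupies an unusual position relative to the paper, because the paper contains no argument at all: its ``proof'' is the citation of \cite[Theorem~1.2]{amdeberhanmedinamoll}, plus the remark that the error term there can be sharpened to $O_f(\log N/\log p)$. Your route --- writing $\nu_p(Q_N)=\sum_{j\ge 1}S_j$, counting $S_j=\rho(p^j)N/p^j+O(\rho(p^j))$ by periodicity, truncating at $J\ll_f \log N/\log p$ (which is exactly where the sharpened error term comes from), and identifying $\sum_{j\ge1}\rho(p^j)p^{-j}$ with $\int_{\mathbb{Z}_p}\nu_p(f(x))\,d\mu(x)$ --- is therefore a genuine, self-contained proof, and its details are sound. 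More importantly, your central observation amounts to a \emph{correction} of the statement: the lemma as printed, for arbitrary $p$, is false, and your counterexample is valid. For $f=X^2+1$ and $p=2$ one has $t_f=0$, yet $\nu_2(Q_N)=\lceil N/2\rceil$, because the true coefficient of $N$ is the integral $\int_{\mathbb{Z}_2}\nu_2(f(x))\,d\mu(x)=1/2$, not $t_f/(p-1)=0$. The clean constant $t_f/(p-1)$ is correct exactly when no root of $f$ outside $\mathbb{Z}_p$ reduces into $\mathbb{F}_p$, which your hypothesis $p\nmid a_k\Delta_f$ guarantees via Hensel's lemma (and which also gives $\rho(p^j)=\rho(p)=t_f$ for all $j$). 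So some such hypothesis must be added to the lemma; note only that when $f$ is not squarefree (allowed by the statement, with $\Delta_f=0$) your hypothesis becomes vacuous, though such $f$ never occur in the paper's application, where the lemma is fed the squarefree polynomial $\widetilde{f}$ of an irreducible $f$.

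One caveat concerns your closing claim that the good primes are ``exactly the primes relevant to the application''. Lemma~\ref{lem:primdiv} invokes Lemma~\ref{lem:padicQN} for \emph{every} prime divisor of $Q_N$, not only for the primes produced by Lemma~\ref{lem:closeroots}: the large primes in $\mathcal{S}_N$ are automatically good once $N$ is large, but the set $\mathcal{S}_N'$ appearing in (\ref{equ:logQN3}) may contain the finitely many primes dividing $a_k\Delta_f$, and for those the asymptotic with main term $t_fN/(p-1)$ can genuinely fail. What (\ref{equ:logQN3}) actually requires, however, is only the upper bound $\nu_p(Q_N)\le kN/(p-1)+O_f(\log N/\log p)$ for $p\nmid a_k$ (the primes dividing $a_k$ add $O_f(N)$ in total, absorbed into the $O(N)$ there), and this follows from your own integral computation: every root of $f$, whether inside or outside $\mathbb{Z}_p$, contributes at most $1/(p-1)$ to $\int_{\mathbb{Z}_p}\nu_p(f(x))\,d\mu(x)$, since its contribution is dominated by $\int_{\mathbb{Z}_p}\nu_p(y)\,d\mu(y)$. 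If you record this upper-bound version, valid for all $p$, alongside your restricted asymptotic, your argument covers everything the paper uses, and in fact repairs the gap created by quoting \cite[Theorem~1.2]{amdeberhanmedinamoll} without the needed hypothesis.
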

\begin{proof}
This is \cite[Theorem~1.2]{amdeberhanmedinamoll}. 
Note that in \cite{amdeberhanmedinamoll} the error term is written as $O(\log N)$, but looking at the proof one can easily check that it is $O_f(\log N / \log p)$.
\end{proof}

Our last auxiliary result establishes a lower bound for the number of ``big'' prime factors of an irreducible polynomial.

\begin{lem}\label{lem:primdiv}
Let $f \in \mathbb{Z}[X]$ be a nonconstant polynomial.
For each positive integers $N$, let $\mathcal{S}_N$ be the set of all prime numbers $p$ such that $p > N$ and $p \mid f(n)$ for some positive integer $n \leq N$.
Then, we have
\begin{equation*}
\#\mathcal{S}_N \gg_f (1 - \delta_f) N ,
\end{equation*}
for all sufficiently large integers $N$.
\end{lem}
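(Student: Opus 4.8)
The plan is to estimate $\log Q_N$, where $Q_N := \prod_{n=1}^N |f(n)|$, in two different ways and to compare them. The key observation on the large primes is that if $p > N$ then each residue class modulo $p$ meets $[1,N]$ at most once, so $p$ divides at most $k := \deg f$ of the values $f(1),\dots,f(N)$; moreover for each such $n$ one has $\nu_p(f(n))\log p \le \log|f(n)| \le k\log N + O_f(1)$. Hence every prime $p \in \mathcal{S}_N$ contributes at most $O_f(\log N)$ to $\sum_{p>N}\nu_p(Q_N)\log p$, and since the primes $p>N$ dividing $Q_N$ are exactly those of $\mathcal{S}_N$,
\[
\#\mathcal{S}_N \gg_f \frac{1}{\log N}\sum_{p>N}\nu_p(Q_N)\log p = \frac{1}{\log N}\Bigl(\log Q_N - \sum_{p\le N}\nu_p(Q_N)\log p\Bigr).
\]
The whole problem thus reduces to showing that the right-hand side is $\gg_f (1-\delta_f)N$.

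Before carrying this out I would dispose of the degenerate cases. If $f$ has a rational root, then every prime not dividing its denominator produces a root of $f$, so $\delta_f = 1$ and the asserted bound is vacuous; hence I may assume $f$ has no rational root, and in particular no root in $\mathbb{N}$. Furthermore, a prime divides $f(n)$ if and only if it divides the squarefree part of $f$, so both $\mathcal{S}_N$ and $\delta_f$ are unchanged upon replacing $f$ by its radical. I may therefore assume $f$ squarefree and root-free on $\mathbb{N}$, which makes Lemma~\ref{lem:padicQN} applicable and guarantees $t_f = \rho_f(p)$, the number of roots of $f$ in $\mathbb{F}_p$, for all but finitely many $p$.

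Next I would evaluate the two main terms. Elementarily, $\log Q_N = \sum_{n\le N}\log|f(n)| = kN\log N + O_f(N)$. For the small primes, Lemma~\ref{lem:padicQN} gives $\nu_p(Q_N) = t_fN/(p-1) + O_f(\log N/\log p)$, and since $\sum_{p\le N}\log N = O(N)$ the error term is absorbed, leaving
\[
\sum_{p\le N}\nu_p(Q_N)\log p = N\sum_{p\le N}\frac{\rho_f(p)\log p}{p-1} + O_f(N).
\]
Writing $\rho_f = \sum_i \rho_{g_i}$ over the distinct irreducible factors $g_1,\dots,g_{\omega_f}$ of $f$ and applying the prime number theorem in each field $\mathbb{Q}[X]/(g_i)$ (where only the degree-one primes matter up to $O(1)$), I obtain $\sum_{p\le N}\rho_f(p)\log p/(p-1) = \omega_f\log N + O_f(1)$, with $\omega_f$ the number of irreducible factors of $f$. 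Substituting everything into the displayed lower bound yields $\#\mathcal{S}_N \gg_f (k-\omega_f)N$.

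It remains to relate $k-\omega_f$ to $1-\delta_f$, and this is the step I expect to require the most care. Let $\mathcal{G}$ be the Galois group of $f$ acting on its $k$ roots, and let $\operatorname{fix}(\sigma)$ denote the number of fixed points of $\sigma$. By Burnside's lemma $\omega_f = \tfrac{1}{\#\mathcal{G}}\sum_\sigma \operatorname{fix}(\sigma)$ is the average number of fixed points, while the Frobenius density theorem recalled above identifies $1-\delta_f$ with the proportion of $\sigma\in\mathcal{G}$ having no fixed point. Therefore
\[
k-\omega_f = \frac{1}{\#\mathcal{G}}\sum_{\sigma\in\mathcal{G}}\bigl(k-\operatorname{fix}(\sigma)\bigr) \ge \frac{1}{\#\mathcal{G}}\sum_{\operatorname{fix}(\sigma)=0} k = k\,(1-\delta_f) \ge 1-\delta_f,
\]
which together with $\#\mathcal{S}_N \gg_f (k-\omega_f)N$ gives the claim, the bound being vacuous exactly when $k=\omega_f$, i.e.\ when $\delta_f=1$. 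The main obstacle is therefore not a single hard estimate but the consistent assembly of the three ingredients: I must ensure that the ``average number of roots equals number of irreducible factors'' input is handled with the correct $p-1$ weighting, and that the reduction to the squarefree, root-free case is legitimate so that $t_f=\rho_f(p)$ and the Burnside count apply without spurious multiplicities.
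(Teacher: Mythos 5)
Your proposal is correct, and although it shares the paper's outer skeleton---the two-sided count of $\log |Q_N|$ with $Q_N=\prod_{n\le N}f(n)$, the split of primes at $p=N$, Stirling's estimate $\log|Q_N|=kN\log N+O_f(N)$, and the observation that each prime $p>N$ contributes only $O_f(\log N)$---it handles the small primes by a genuinely different mechanism. The paper never identifies $t_f$ exactly: it simply bounds $t_f\le k$ and combines its Theorem~\ref{thm:density} (effective Chebotarev for $\mathcal{P}_f$) with partial summation to get the one-sided bound $\sum_{p\le N}\nu_p(Q_N)\log p\le \delta_f kN\log N+O_f(N)$, which already yields $\#\mathcal{S}_N\gg_f(1-\delta_f)kN$ with no group theory beyond the Frobenius value of $\delta_f$. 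You instead evaluate the small-prime contribution asymptotically: after reducing to $f$ squarefree with no rational root (a legitimate reduction, and genuinely needed for your later steps), Hensel's lemma identifies the $t_f$ of Lemma~\ref{lem:padicQN} with $\rho_f(p)$ for all but finitely many $p$, the prime ideal theorem (Mertens-type asymptotics for each field $\mathbb{Q}[X]/(g_i)$ suffice) gives $\sum_{p\le N}\rho_f(p)\log p/(p-1)=\omega_f\log N+O_f(1)$, and you arrive at $\#\mathcal{S}_N\gg_f(k-\omega_f)N$; Burnside's lemma together with the Frobenius description of $1-\delta_f$ as the proportion of fixed-point-free elements then converts this via $k-\omega_f\ge k(1-\delta_f)$. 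Your route buys a sharper intermediate constant---$k-\omega_f$ can strictly exceed $k(1-\delta_f)$, e.g.\ for $f=(X^2+1)(X^2+2)(X^2-2)$, where $\delta_f=1$ but $k-\omega_f=3$---and it needs no effective error term where the paper invokes Serre; the paper's route buys economy, reusing Theorem~\ref{thm:density} and avoiding Hensel, the prime ideal theorem, and the Burnside step altogether. One nitpick: your closing assertion that the bound is vacuous ``exactly when $k=\omega_f$, i.e.\ when $\delta_f=1$'' claims an equivalence that is false in general (the example above has $\delta_f=1$ yet $k>\omega_f$); your argument, however, only uses the true implication $k=\omega_f\Rightarrow\delta_f=1$, so nothing breaks.
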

\begin{proof}
We proceed similarly to the first part of the proof of \cite[Theorem~5.1]{evereststevenstamsettward}.

Define $Q_N$ as in (\ref{equ:QN}).
If $f$ has a positive integer root, then the claim follows.
Hence we can assume that $f$ has no roots in $\mathbb{N}$.
In particular, $Q_N \neq 0$ for every integer $N \geq 1$.
Clearly, $\mathcal{S}_N = \{p : p \mid Q_N, \; p > N\}$.
Put $\mathcal{S}_N^\prime := \{p : p \mid Q_N, \; p \leq N\}$, so that
\begin{equation}\label{equ:logQN1}
\log|Q_N| = \sum_{p \in \mathcal{S}_N} \nu_p(Q_N) \log p + \sum_{p \in \mathcal{S}_N^\prime} \nu_p(Q_N) \log p ,
\end{equation}
for every positive integer $N$.
For the rest of the proof, all the implied constants may depend on~$f$.
By Lemma~\ref{lem:padicQN}, we have
\begin{equation*}
\nu_p(Q_N) = \frac{t_f N}{p - 1} + O\!\left(\frac{\log N}{\log p}\right),
\end{equation*}
for every integer $N \geq 2$, and thus
\begin{equation}\label{equ:logQN2}
\sum_{p \in \mathcal{S}_N} \nu_p(Q_N) \log p \ll \sum_{p \in \mathcal{S}_N} \log p \leq \sum_{p \in \mathcal{S}_N} \log |f(N)| \ll \#\mathcal{S}_N \log N .
\end{equation}
Since $\mathcal{S}_N^\prime \subseteq \mathcal{P}_f(N)$, from Theorem~\ref{thm:density} it follows that
\begin{equation*}
\# \mathcal{S}_N^\prime \ll \frac{N}{\log N} ,
\end{equation*}
and that, by partial summation,
\begin{equation*}
\sum_{p \in \mathcal{S}_N^\prime} \frac{\log p}{p - 1} \leq \sum_{p \in \mathcal{P}_f(N)} \frac{\log p}{p - 1} = \delta_f \log N + O\!\left(1\right) ,
\end{equation*}
for every integer $N \geq 2$.
Therefore,
\begin{equation}\label{equ:logQN3}
\sum_{p \in \mathcal{S}_N^\prime} \nu_p(Q_N) \log p \leq \sum_{p \in \mathcal{S}_N^\prime} \left(\frac{kN \log p}{p - 1} + O(\log N)\right) \leq \delta_f k N \log N + O(N) .
\end{equation}
for every integer $N \geq 2$.
Finally, by Stirling's formula
\begin{equation}\label{equ:logQN4}
\log|Q_N| = k N \log N + O(N).
\end{equation}
Putting together (\ref{equ:logQN1}), (\ref{equ:logQN2}), (\ref{equ:logQN3}), and (\ref{equ:logQN4}), we get
\begin{equation*}
\#\mathcal{S}_N \gg (1 - \delta_f) k N + O\!\left(\frac{N}{\log N}\right) ,
\end{equation*}
and the desired result follows.
\end{proof}

\begin{rem}
Note that Lemma~\ref{lem:primdiv} is trivial if $\delta_f = 1$.
\end{rem}

\section{Proof of Theorem~\ref{thm:main}}

Let $f \in \mathbb{Z}[X]$ be a nonconstant polynomial of degree $2$ or $3$.
If $f$ is reducible in $\mathbb{Z}[X]$, then there exists a linear polynomial $h \in \mathbb{Z}[X]$ such that $h(n) \mid f(n)$ for all integers $n$; and the existence of $G_f$ follows immediately from the existence of $G_h$ proved by Evans~\cite{evans}.
Therefore, we can assume that $f$ is irreducible in $\mathbb{Z}[X]$.
Hence the Galois group of $f$ over $\mathbb{Q}$ is precisely one of $S_2$, $S_3$, or $A_3$, and by the Frobenius density theorem $\delta_f$ is $1/2$, $2/3$, or $1/3$, respectively.
Further, by Lemma~\ref{lem:samegalois} we know that $f$ and $\widetilde{f}$ has the same Galois group over $\mathbb{Q}$, and, consequently, by the Frobenius density theorem $\delta_{\widetilde{f}} = \delta_f$.

Let $N$ be a sufficiently large positive integer.
Define $\mathcal{S}_N$ as the set of all prime numbers $p$ such that $p > N/2$ and $p \mid \widetilde{f}(r)$ for some positive integer $r \leq N / 2$.
Thanks to the previous considerations and Lemma~\ref{lem:primdiv}, we have that
\begin{equation}\label{equ:SNc1N}
\#\mathcal{S}_N \geq c_1 N ,
\end{equation}
for all sufficiently large $N$, where $c_1 > 0$ is constant depending only on $f$.
Moreover, Lemma~\ref{lem:closeroots} tell us that for each $p \in \mathcal{S}_N$ there exists two integers $z_p^-$ and $z_p^+$ such that
\begin{equation*}
f(z_p^-) \equiv f(z_p^+) \equiv 0 \bmod p ,
\end{equation*}
and $0 < z_p^+ - z_p^- \leq N / 2 < p$.

Now since
\begin{equation*}
\sum_{p \in \mathcal{P}_f} \frac1{p} = +\infty ,
\end{equation*}
we can fix $s \geq 1$ elements $p_1 < \cdots < p_s$ of $\mathcal{P}_f$ such that
\begin{equation}\label{equ:prodpi}
\prod_{i = 1}^s \left(1 - \frac1{p_i}\right) < \frac{c_1}{3} .
\end{equation}
Moreover, by the definition of $\mathcal{P}_f$, for each $p \in \mathcal{P}_f$ we can pick an integer $z_p$ such that $f(z_p) \equiv 0 \pmod p$.
 
Let $h_1 < \ldots < h_{N_1}$ be all the elements of $\{1, \ldots, N\}$ which are not divisible by any of the primes $p_1, \ldots, p_s$, and let $k_1 < \cdots < k_{N_2}$ be all the remaining elements, so that $N = N_1 + N_2$.
By the Eratosthenes' sieve and (\ref{equ:prodpi}), we have
\begin{equation}\label{equ:N1}
N_1 \leq N \prod_{i = 1}^s \left(1 - \frac1{p_i}\right) + 2^s < \frac{c_1}{2} N ,
\end{equation}
for all sufficiently large $N$.
Let $q_1 < \cdots < q_t$ be all the elements of $\mathcal{S}_N \setminus \{p_1, \ldots, p_s\}$.
From (\ref{equ:SNc1N}) and (\ref{equ:N1}), we get that
\begin{equation*}
t \geq c_1 N - s > \frac{c_1}{2} N > N_1 ,
\end{equation*}
for all sufficiently large $N$.
As a consequence, for any $j = 1, \ldots, N_1$, we can define $r_j = z_{q_j}^-$ if $h_j \leq N/2$, and $r_j = z_{q_j}^+$ if $h_j > N/2$.
Finally, we assume $N$ sufficiently large so that $N \geq 2p_s$.

At this point, note that by construction $p_1, \ldots, p_s$ and $q_1, \ldots, q_{N_1}$ are all pairwise distinct.
Thus, by the Chinese Remainder Theorem, the system of congruences:
\begin{equation*}
\begin{cases}
n \equiv z_{p_i} &\pmod {p_i} \quad i = 1, \ldots, s \\
n \equiv r_j - h_j &\pmod {q_j} \quad j = 1, \ldots, N_1
\end{cases}
\end{equation*}
has infinitely many positive integer solutions.
If $n$ is a solution, then it is easy to see that none of the integers among
\begin{equation*}
f(n + 1), f(n + 2), \ldots, f(n + N)
\end{equation*}
is relatively prime to all the others.

Indeed, take any $h \in \{1, \ldots, N\}$.
On one hand, if $h$ is divisible by some $p_i$, then
\begin{equation*}
f(n + h) \equiv f(n + h \pm p_i) \equiv f(z_{p_i}) \equiv 0 \pmod {p_i} ,
\end{equation*}
so that 
\begin{equation*}
\gcd(f(n + h), f(n + h \pm p_i)) > 1,
\end{equation*}
while $h \pm p_i \in \{1, \ldots, N\}$ for the right choice of the sign, since $N \geq 2p_s$.

On the other hand, if $h$ is not divisible by any of $p_1, \ldots, p_s$, then $h = h_j$ for some $j \in \{1, \ldots, N_1\}$.
If $h_j \leq N / 2$, then
\begin{equation*}
f(n + h) \equiv f(z_{q_j}^-) \equiv 0 \pmod {q_j} ,
\end{equation*}
and
\begin{equation*}
f(n + h + z_{q_j}^+ - z_{q_j}^-) \equiv f(z_{q_j}^+) \equiv 0 \pmod {q_j} ,
\end{equation*}
so that 
\begin{equation*}
\gcd(f(n + h), f(n + h + z_{q_j}^+ - z_{q_j}^-)) > 1 ,
\end{equation*}
while $h + z_{q_j}^+ - z_{q_j}^- \in \{1, \ldots, N\}$.
Similarly, if $h_j > N / 2$ then
\begin{equation*}
\gcd(f(n + h + z_{q_j}^- - z_{q_j}^+), f(n + h)) > 1 ,
\end{equation*}
while $h + z_{q_j}^- - z_{q_j}^+ \in \{1, \ldots, N\}$.

Hence, the existence of $G_f$ has been proved.

\begin{rem}
Note that when $f$ has a linear factor $h=a+dX\in\mathbb{Z}[X]$, we can say more than the existence of $G_f$. Namely, we may apply the results of Hajdu and Saradha~\cite{hajdusaradha} to get an effective upper bound on $G_f$ depending on the number of prime factors of $d$.
\end{rem}

\end{document}